\newtheorem{theorem}{Theorem}[section]
\newtheorem{lemma}[theorem]{Lemma}
\theoremstyle{definition}
\newtheorem{example}[theorem]{Example}
\begin{document}
	
	\baselineskip 16pt

	\title{A result on $s$-semipermutable subgroups of finite groups and some applications}
	
	\author{Fawaz Aseeri\\
		{\small Department of Mathematical Sciences, College of Applied Sciences, Umm Al-Qura University,}\\ {\small Makkah 21955, Saudi Arabia}\\
		{\small E-mail:
			fiaseeri@uqu.edu.sa}\\ \\
		{Julian Kaspczyk\footnote{Corresponding author}}\\
		{\small Institut für Algebra, Fakultät Mathematik, Technische Universität Dresden,}\\
		{\small 01069 Dresden, Germany}\\
		{\small E-mail: julian.kaspczyk@gmail.com}}
	
	\date{}
	\maketitle

	\begin{abstract} 
		Let $p$ be a prime number, $G$ be a $p$-solvable finite group and $P$ be a Sylow $p$-subgroup of $G$. We prove that $G$ is $p$-supersolvable if $N_G(P)$ is $p$-supersolvable and if there is a subgroup $H$ of $P$ with $P' \le H \le \Phi(P)$ such that $H$ is $s$-semipermutable in $G$. As applications, we simplify the proofs of some known results and also generalize some known results. 
	\end{abstract}
	
	\footnotetext{Keywords: finite groups, $p$-supersolvable, $p$-nilpotent, $s$-semipermutable, $s$-permutable, pronormal, weakly $\mathscr{H}$-subgroup, $c$-supplemented.}

	\footnotetext{Mathematics Subject Classification (2020): 20D10, 20D20.}
	\let\thefootnote\thefootnoteorig

	\section{Introduction}

	In this paper, all groups are assumed to be finite. We use standard notation and terminology, see for example \cite{DoerkHawkes} or \cite{Huppert}. Throughout, let $p$ be an arbitrary but fixed prime number.
	
	Recall that two subgroups $H$ and $K$ of a group $G$ are said to \textit{permute} if $HK = KH$, or equivalently if $HK$ is a subgroup of $G$. A subgroup $H$ of a group $G$ is said to be \textit{$s$-permutable} in $G$ if $H$ permutes with every Sylow subgroup of $G$. A subgroup $H$ of a group $G$ is called \textit{$s$-semipermutable} in $G$ if $H$ permutes with every Sylow $q$-subgroup $Q$ of $G$ for all primes $q$ not dividing $|H|$. 
	
	By a result of Wielandt \cite[Kapitel IV, Satz 8.1]{Huppert}, a group $G$ with Sylow $p$-subgroup $P$ is $p$-nilpotent if $P$ is regular and $N_G(P)$ is $p$-nilpotent. By \cite[Kapitel III, Satz 10.2 (a)]{Huppert}, a $p$-group $P$ is regular if the nilpotency class of $P$ is less than $p$, i.e. if $Z_{p-1}(P) = P$. Consequently, a group $G$ with Sylow $p$-subgroup $P$ is $p$-nilpotent if $Z_{p-1}(P) = P$ and $N_G(P)$ is $p$-nilpotent. Recently, Xu and Li \cite{XuLi} generalized this result as follows. 
	
	\begin{theorem}
		\label{p-nilpotence_theorem} 
		(\cite[Theorem 1.3]{XuLi}) Let $G$ be a group of order divisible by $p$, and let $P$ be a Sylow $p$-subgroup of $G$. Suppose that there is a normal subgroup $H$ of $P$ with $H \le \Phi(P)$ and $P/H = Z_{p-1}(P/H)$ such that $H$ is $s$-semipermutable in $G$. Suppose moreover that $N_G(P)$ is $p$-nilpotent. Then $G$ is $p$-nilpotent. 
	\end{theorem}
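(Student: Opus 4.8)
The plan is to argue by a minimal counterexample: assume $G$ is a group of minimal order satisfying the hypotheses but failing to be $p$-nilpotent. The strategy is to show that $H$ is in fact normal in all of $G$, then pass to the ``regular'' quotient $G/H$ where the consequence of Wielandt's theorem quoted above applies directly, and finally lift $p$-nilpotency back to $G$ using $H \le \Phi(P)$.

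First I would dispose of the degenerate case and carry out the standard reductions. If $H = 1$, then $Z_{p-1}(P) = P$, so $P$ is regular and $G$ is $p$-nilpotent by the criterion in the introduction; hence $H \neq 1$. Next I would show $O_{p'}(G) = 1$: if not, pass to $\bar{G} = G/O_{p'}(G)$, where $\bar{P} \cong P$ is a Sylow $p$-subgroup, $\bar{H} = HO_{p'}(G)/O_{p'}(G)$ is again $s$-semipermutable with $\bar{H} \le \Phi(\bar{P})$, $\bar{P}/\bar{H} \cong P/H$ still has class $< p$, and $N_{\bar{G}}(\bar{P}) = N_G(P)O_{p'}(G)/O_{p'}(G)$ is $p$-nilpotent as a quotient of $N_G(P)$; minimality would make $\bar{G}$ $p$-nilpotent, whence $G$ is $p$-nilpotent, a contradiction. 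These reductions rest on the standard inheritance properties of $s$-semipermutable subgroups under quotients by $p'$-subgroups.

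The heart of the argument is to prove $H \trianglelefteq G$. Here I would use two standard facts about $s$-semipermutable $p$-subgroups: (i) if $O_{p'}(G) = 1$, then every $s$-semipermutable $p$-subgroup lies in $O_p(G)$; and (ii) an $s$-semipermutable $p$-subgroup contained in $O_p(G)$ is $s$-permutable in $G$ (indeed $H \le O_p(G) \le P^x$ for every Sylow $p$-subgroup $P^x$, so $H$ permutes with every Sylow $p$-subgroup, and it permutes with the remaining Sylow subgroups by $s$-semipermutability). Having established that $H$ is $s$-permutable, I would invoke the theorem that $O^p(G)$ normalizes every $s$-permutable $p$-subgroup; combined with $P \le N_G(H)$ (which holds since $H \trianglelefteq P$ by hypothesis) and $G = P\,O^p(G)$, this gives $N_G(H) = G$, i.e. $H \trianglelefteq G$. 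I expect fact (i) to be the main obstacle: it is the single point where $s$-semipermutability together with $O_{p'}(G) = 1$ must genuinely be exploited to prevent $H$ from acquiring any ``$p'$-spread'' across $G$.

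Finally I would harvest the contradiction. Since $H \trianglelefteq G$, $H \le P$, and $H \le \Phi(P)$, a non-generator argument shows $H \le \Phi(G)$: if a maximal subgroup $M$ of $G$ did not contain $H$, then $G = HM$ and $P = H(P \cap M)$, forcing $P = P \cap M \le M$, so that $[G:M]$ would be simultaneously a power of $p$ and prime to $p$, which is absurd. On the other hand, in $\bar{G} = G/H$ the Sylow $p$-subgroup $\bar{P} = P/H$ satisfies $Z_{p-1}(\bar{P}) = \bar{P}$, and $N_{\bar{G}}(\bar{P}) = N_G(P)/H$ is $p$-nilpotent as a quotient of $N_G(P)$; by the criterion quoted in the introduction, $G/H$ is $p$-nilpotent. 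Since the class of $p$-nilpotent groups is a saturated formation and $H \le \Phi(G)$, it follows that $G$ is $p$-nilpotent, contradicting the choice of $G$ and completing the proof.
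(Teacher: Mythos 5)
Your outer scaffolding (minimal counterexample, reduction to $O_{p'}(G) = 1$, forcing $H \trianglelefteq G$, deducing $H \le \Phi(G)$, passing to $G/H$ where $Z_{p-1}(P/H) = P/H$ makes Wielandt's criterion apply, and lifting $p$-nilpotency back by saturation) matches the endgame of the proof this paper gives in Section \ref{proof_special_case} for the special case $P' \le H$. But your pivotal ``standard fact'' (i) is false as stated: $O_{p'}(G) = 1$ does \emph{not} force every $s$-semipermutable $p$-subgroup into $O_p(G)$. Take $G = S_4$ and $p = 2$: here $O_{2'}(G) = 1$, and a Sylow $2$-subgroup $P$ is $s$-semipermutable in $G$ (for every Sylow $3$-subgroup $Q$ we have $|PQ| = |P||Q| = 24$, so $PQ = G = QP$), yet $P \not\le O_2(G) = V_4$. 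You give no argument for (i) and correctly sense it is ``the main obstacle'' --- but flagging it does not fill it. Since, as your own write-up demonstrates, the theorem falls out in a few lines once $H \le O_p(G)$ is granted, essentially all of the content of the theorem is concentrated in exactly the step you left unproved; whether some restricted version of (i) holds under the extra hypotheses $H \le \Phi(P)$ and $P/H = Z_{p-1}(P/H)$ is precisely the hard part and cannot be waved through as standard.

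The correct route to $H \le O_p(G)$ and $H \trianglelefteq G$ --- visible in the paper's Section \ref{proof_special_case} proof of its special case --- goes through the normal closure $H^G$ rather than through $O_p(G)$ directly. One first rules out $PH^G = G$: in the special case this uses Isaacs's theorem that $H^G$ is solvable (Lemma \ref{BerkovichIsaacsLemma} (4)) to get $G$ solvable, then Theorem \ref{main_theorem} plus $O_{p'}(G) = 1$ to force $P \trianglelefteq G$ and a contradiction; in the general setting of the statement, where one only has $H \trianglelefteq P$ rather than $P' \le H$, a genuinely different argument is needed here, which is where Xu and Li do their real work. Once $PH^G < G$, minimality applied to $PH^G$ (the hypotheses descend by Lemma \ref{BerkovichIsaacsLemma} (3)) makes $H^G$ $p$-nilpotent, hence a $p$-group since $O_{p'}(H^G) \le O_{p'}(G) = 1$, giving $H \le H^G \le O_p(G)$; only now does Lemma \ref{BerkovichIsaacsLemma} (2), applied with $N = H^G$, show that $O^p(G)$ normalizes $H = H \cap H^G$, whence $H \trianglelefteq G$ since also $P \le N_G(H)$ and $G = PO^p(G)$. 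Your step (ii) is correct but superfluous --- Lemma \ref{BerkovichIsaacsLemma} (2) bypasses $s$-permutability entirely --- and everything after $H \trianglelefteq G$ in your proposal is sound; without a valid substitute for (i), however, the proof does not go through.
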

	
	The main result of this paper is a version of Theorem \ref{p-nilpotence_theorem} for $p$-supersolvable groups. Our point of departure is the observation that a $p$-solvable group $G$ with Sylow $p$-subgroup $P$ is $p$-supersolvable if $P$ is abelian and $N_G(P)$ is $p$-supersolvable. To see this, it is enough to consider the case $O_{p'}(G) = 1$. Then we have $C_G(O_p(G)) \le O_p(G)$ by \cite[Chapter 6, Theorem 3.2]{Gorenstein}, whence $P = O_p(G)$ as $P$ is abelian. So $G = N_G(O_p(G)) = N_G(P)$ is $p$-supersolvable, as claimed. 
	
	Our main result generalizes this observation as follows. 
	
	\begin{theorem}
		\label{main_theorem}
		Let $G$ be a $p$-solvable group, and let $P$ be a Sylow $p$-subgroup of $G$. Suppose that there is a subgroup $H$ of $P$ such that $P' \le H \le \Phi(P)$ and such that $H$ is $s$-semipermutable in $G$. Suppose moreover that $N_G(P)$ is $p$-supersolvable. Then $G$ is $p$-supersolvable. 
	\end{theorem}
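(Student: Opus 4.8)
The plan is to argue by induction on $|G|$, taking $G$ to be a counterexample of minimal order. The first reduction is to the case $O_{p'}(G) = 1$. Indeed, if $N := O_{p'}(G) \neq 1$, then I would check that the hypotheses descend to $G/N$: the image $PN/N$ is a Sylow $p$-subgroup isomorphic to $P$, so $(PN/N)' = P'N/N \le HN/N \le \Phi(P)N/N = \Phi(PN/N)$; moreover $HN/N$ is $s$-semipermutable in $G/N$ since $s$-semipermutability is inherited by quotients, and $N_{G/N}(PN/N) = N_G(P)N/N$ is a quotient of $N_G(P)$, hence $p$-supersolvable. By minimality $G/N$ would be $p$-supersolvable, and since $N$ is a $p'$-group this would force $G$ to be $p$-supersolvable, a contradiction. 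Hence $O_{p'}(G) = 1$.

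The heart of the argument is to show that $H \trianglelefteq G$, and here I would combine two facts. On the one hand, the hypothesis $P' \le H$ forces $H \trianglelefteq P$, since every subgroup of $P$ containing $P'$ is normal in $P$. On the other hand, I would invoke the standard fact that an $s$-semipermutable $p$-subgroup becomes $s$-permutable after factoring out the $p'$-core; as $O_{p'}(G) = 1$, this makes $H$ itself $s$-permutable in $G$. For an $s$-permutable $p$-subgroup one has $O^p(G) \le N_G(H)$, so $O^p(G)$ normalizes $H$. Since $P$ is a Sylow $p$-subgroup, $G = P\,O^p(G)$, and as both factors normalize $H$ we conclude $H \trianglelefteq G$.

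With $H$ normal in $G$ I would finish as follows. Because $H$ is a normal $p$-subgroup contained in $\Phi(P)$, the standard Frattini-type lemma (if $P = N(P\cap M)$ and $H \le \Phi(P)$, then $P \le M$) gives $H \le \Phi(G)$. Passing to $\overline{G} = G/H$, the Sylow $p$-subgroup $P/H$ is abelian (as $P' \le H$), the group $\overline G$ is $p$-solvable, and $N_{\overline G}(P/H) = N_G(P)/H$ is $p$-supersolvable; the observation recorded just before the statement of the theorem then yields that $\overline G = G/H$ is $p$-supersolvable. Finally, since the class of $p$-supersolvable groups is a saturated formation and $H \le \Phi(G)$, the $p$-supersolvability of $G/H$ lifts to $G$, contradicting the choice of $G$.

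I expect the main obstacle to be the passage from the permutability hypothesis on $H$ to genuine normality in $G$, i.e.\ the second paragraph. Everything rests on converting the rather weak condition of $s$-semipermutability into the statement $O^p(G) \le N_G(H)$, which is exactly where the reduction to $O_{p'}(G) = 1$ and the interplay with $P' \le H$ are used; once $H \trianglelefteq G$ is in hand, the remaining steps are routine applications of the Frattini lemma, the abelian-Sylow observation, and saturation.
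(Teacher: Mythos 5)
Your first paragraph and your endgame are sound: the reduction to $O_{p'}(G)=1$ matches the paper's steps (1)--(2), and \emph{if} one had $H \trianglelefteq G$, then $H \le \Phi(G)$ by \cite[Kapitel III, Hilfssatz 3.3]{Huppert}, the introductory abelian-Sylow observation applied to $G/H$, and saturation of the class of $p$-supersolvable groups via \cite[Kapitel VI, Satz 8.6 a)]{Huppert} would indeed finish the proof. But the proof breaks exactly where you predicted trouble: the claim that an $s$-semipermutable $p$-subgroup of a group with trivial $p'$-core is $s$-permutable is not a standard fact, and it is false. Take $G = S_4$ and $H = P$ a Sylow $2$-subgroup. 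Then $O_{2'}(S_4) = 1$, and $P$ is $s$-semipermutable since $PQ = G$ for every Sylow $3$-subgroup $Q$; yet $P$ is not $s$-permutable, because for $P \ne P^g$ one has $|PP^g| = 8 \cdot 8/4 = 16$, which does not divide $24$, so $P$ fails to permute with its own conjugates (equivalently: $s$-permutable subgroups are subnormal by Kegel, and a non-normal Sylow subgroup is not subnormal). The only tool of this kind actually available is the Berkovich--Isaacs result quoted as Lemma \ref{BerkovichIsaacsLemma} (2): for a \emph{normal $p$-subgroup} $N$ of $G$, the group $O^p(G)$ normalizes $H \cap N$. This controls $H \cap O_p(G)$, not $H$ itself, and gives no route to $H \trianglelefteq G$.

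This is precisely where the paper diverges from your plan. It never proves $H \trianglelefteq G$; instead, in a minimal counterexample it shows $\Phi(G)=1$, that $O_p(G)$ is the \emph{unique} minimal normal subgroup, and that $U := H \cap O_p(G)$ is normal in $G$ (via Lemma \ref{BerkovichIsaacsLemma} (2) together with $H \trianglelefteq P$ and the factorization $G = PO^p(G)$ that you also use), whence $U = 1$, since $U = O_p(G)$ would force $O_p(G) \le \Phi(P)$ and hence $O_p(G) \le \Phi(G)$. Note what this says about your intermediate goal: in the minimal counterexample $O_p(G) \ne 1$ is the unique minimal normal subgroup and $H \cap O_p(G) = 1$, so $H \trianglelefteq G$ could only hold with $H = 1$ --- establishing normality of $H$ is therefore essentially as hard as the theorem itself (indeed it would end the proof at once, since $H=1$ gives $P' = 1$ and the abelian-Sylow observation applies), and your proposed lemma cannot supply it. Lacking normality, the paper needs a genuinely different mechanism to exploit $H \cap O_p(G) = 1$: its steps (5)--(6) show that every maximal subgroup not containing $O_p(G)$ contains $P'$, that the intersection of all such maximal subgroups is a normal subgroup with trivial core, and hence that $P' = 1$; then $P$ is abelian, $C_G(O_p(G)) \le O_p(G)$ forces $P = O_p(G)$, and $G = N_G(P)$ is $p$-supersolvable, a contradiction. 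So the gap in your argument is the single false conversion of $s$-semipermutability into $s$-permutability; everything built on top of it is fine, but the theorem requires the paper's maximal-subgroup detour (or some substitute) in its place.
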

	
	
	In view of Theorems \ref{p-nilpotence_theorem} and \ref{main_theorem}, one might wonder whether the condition $P' \le H \le \Phi(P)$ in Theorem \ref{main_theorem} can be replaced by the condition that $H \trianglelefteq P$, $H \le \Phi(P)$ and $P/H = Z_{p-1}(P/H)$. The following example demonstrates that this is not the case.
	
	\begin{example}
		Let $G$ be the group indexed in GAP \cite{GAP} as \verb|SmallGroup(216,153)|, and let $P$ be a Sylow $3$-subgroup of $G$. Then $G$ is solvable and hence $3$-solvable. Also, $P$ has nilpotency class $2$. So, with $H = 1$, we have that $H \trianglelefteq P$, $H \le \Phi(P)$ and $P/H = Z_2(P/H) = Z_{3-1}(P/H)$. Moreover, $H$ is $s$-semipermutable in $G$, and $N_G(P)$ is $3$-supersolvable. However, $G$ is not $3$-supersolvable. 
	\end{example}
	
	The paper is organized as follows. After collecting some preliminary results in Section \ref{preliminaries}, we will prove Theorem \ref{main_theorem} in Section \ref{proof_main_theorem}. After that, in Section \ref{proof_special_case}, we will apply Theorem \ref{main_theorem} to give a new proof of a special case of Theorem \ref{p-nilpotence_theorem}. Namely, we use Theorem \ref{main_theorem} to show that a group $G$ with Sylow $p$-subgroup $P$ is $p$-nilpotent provided that $N_G(P)$ is $p$-nilpotent and that there is a subgroup $H$ of $P$ with $P' \le H \le \Phi(P)$ such that $H$ is $s$-semipermutable in $G$. Finally, in Section \ref{applications}, we will use Theorems \ref{p-nilpotence_theorem} and \ref{main_theorem} to generalize a result of Liu and Yu \cite{LiuYu} on pronormal subgroups and a result of Chen et al. \cite{Chen} on weakly $\mathscr{H}$-subgroups and to simplify the proofs of these results.

	\section{Preliminaries} 
	\label{preliminaries} 
	
	In this section, we collect some results needed for the proof of Theorem \ref{main_theorem} and for our applications of Theorems \ref{p-nilpotence_theorem} and \ref{main_theorem}. 
	
	\begin{lemma}
		\label{BerkovichIsaacsLemma} 
		Let $H$ be an $s$-semipermutable subgroup of a group $G$. 
		\begin{enumerate}
			\item[(1)] (\cite[Lemma 3.1]{BerkovichIsaacs}) If $H$ is a $p$-subgroup of $G$ and $N$ is a normal subgroup of $G$, then $HN/N$ is $s$-semipermutable in $G/N$. 
			\item[(2)] (\cite[Lemma 3.2]{BerkovichIsaacs}) If $H$ is a $p$-subgroup of $G$ and if $N$ is a normal $p$-subgroup of $G$, then $H \cap N$ is normalized by $O^p(G)$.
			\item[(3)] (\cite[Lemma 3.3]{BerkovichIsaacs}) If $H \le K \le G$, then $H$ is $s$-semipermutable in $K$.
			\item[(4)] (\cite[Theorem A]{Isaacs}) If $H$ is a $p$-group, then $H^G$ is solvable, where $H^G$ denotes the normal closure of $H$ in $G$. 
		\end{enumerate}
	\end{lemma}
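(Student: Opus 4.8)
The plan is to treat the four parts separately, since they are logically independent: parts (1)--(3) follow quickly from the definition of $s$-semipermutability together with elementary facts about how products and intersections interact with Sylow subgroups, while part (4) is a genuinely deep theorem and will be the main obstacle. For part (1), I would first recall that every Sylow $q$-subgroup of $G/N$ has the form $QN/N$ for some Sylow $q$-subgroup $Q$ of $G$. Since $H$ is a $p$-group, $HN/N \cong H/(H \cap N)$ is again a $p$-group, so the only primes $q$ that need to be considered satisfy $q \neq p$, and for such $q$ we have $q \nmid |H|$. The $s$-semipermutability of $H$ in $G$ then gives $HQ = QH$, and pushing this product into the quotient yields $(HN/N)(QN/N) = HQN/N$; the right-hand side is a subgroup because $HQ$ is a subgroup and $N \trianglelefteq G$, so $HN/N$ and $QN/N$ permute, as required.

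For part (2), I would use that $O^p(G)$ is generated by the Sylow $q$-subgroups of $G$ over all primes $q \neq p$, so it suffices to show that each such $Q$ normalizes $H \cap N$. Fixing a prime $q \neq p$ and a Sylow $q$-subgroup $Q$, the set $L := HQ = QH$ is a subgroup of order $|H||Q|$ (as $H \cap Q = 1$), whence $H$ is a Sylow $p$-subgroup of $L$. Since $N \trianglelefteq G$, the intersection $L \cap N$ is a normal $p$-subgroup of $L$ and therefore lies in the Sylow $p$-subgroup $H$; combined with $H \cap N \le L \cap N$ this forces $L \cap N = H \cap N$. As $L \cap N \trianglelefteq L$ and $Q \le L$, the group $Q$ normalizes $H \cap N$, and taking the join over all such $Q$ shows that $O^p(G)$ normalizes $H \cap N$.

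For part (3), the cleanest route is Dedekind's modular law. Given a prime $q \nmid |H|$ and a Sylow $q$-subgroup $Q_K$ of $K$, I would extend $Q_K$ to a Sylow $q$-subgroup $Q$ of $G$, observing that $Q \cap K = Q_K$ by the maximality of $Q_K$ in $K$. Then $HQ = QH$ is a subgroup of $G$, and since $H \le K$ the modular law gives $HQ \cap K = H(Q \cap K) = HQ_K$. The left-hand side is a subgroup, so $HQ_K = Q_K H$, which shows that $H$ permutes with every Sylow $q$-subgroup of $K$ for $q \nmid |H|$; that is, $H$ is $s$-semipermutable in $K$.

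Part (4) is where the real work lies and is the genuine obstacle: it is Isaacs' theorem that the normal closure $H^G$ of an $s$-semipermutable $p$-subgroup $H$ is solvable, and I would not expect a short self-contained argument. The strategy I would pursue is a minimal-counterexample reduction to the case $G = H^G$, followed by an analysis of a minimal normal subgroup together with the components of $G$, aiming to show that a non-abelian simple section is incompatible with a $p$-subgroup that permutes with all Sylow $q$-subgroups for $q \neq p$, and ultimately deriving a contradiction from the Sylow structure and orders of the finite simple groups. Since this is precisely the content of a dedicated published theorem, in the present paper I would invoke it as cited rather than reproduce its proof.
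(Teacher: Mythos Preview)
Your arguments for parts (1)--(3) are correct and are essentially the standard proofs; your handling of part (4) --- citing Isaacs' theorem rather than reproving it --- is also appropriate. Note, however, that the paper itself gives no proof of this lemma at all: it is stated purely as a compilation of results from \cite{BerkovichIsaacs} and \cite{Isaacs}, with each item carrying only a citation. So there is nothing in the paper to compare your argument against; you have supplied genuine content where the paper merely quotes the literature.

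One minor remark on your proof of (3): when you say ``$Q \cap K = Q_K$ by the maximality of $Q_K$ in $K$'', this is correct but deserves a word --- $Q_K \le Q \cap K$ by construction, and $Q \cap K$ is a $q$-subgroup of $K$, so equality follows because $Q_K$ is already Sylow in $K$. Otherwise your write-up is clean and would serve as a self-contained replacement for the bare citations.
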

	
	\begin{lemma}
		\label{Yu2017} 
		Let $G$ be a group of order divisible by $p$, and let $P$ be a Sylow $p$-subgroup of $G$. Suppose that $P$ has a subgroup $D$ with $1 < |D| < |P|$ such that any subgroup of $P$ with order $|D|$ is $s$-permutable in $G$. If $P$ is a nonabelian $2$-group and $|D| = 2$, suppose moreover that any cyclic subgroup of $P$ with order $4$ is $s$-permutable in $G$. Then $G$ is $p$-supersolvable. 
	\end{lemma}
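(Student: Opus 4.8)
The plan is to argue by a minimal counterexample: I take $G$ of least order satisfying the hypotheses but failing to be $p$-supersolvable, and write $|D| = p^d$, $|P| = p^n$, so that $1 \le d < n$. Throughout I would lean on three standard properties of $s$-permutable subgroups: an $s$-permutable subgroup is subnormal, so every $s$-permutable $p$-subgroup of $G$ lies in $O_p(G)$; $s$-permutability is inherited by quotients and by intermediate subgroups; and if a $p$-subgroup $H$ is $s$-permutable in $G$, then $O^p(G)$ normalizes $H$. Since $P$ is a Sylow $p$-subgroup, note also that $G = P\,O^p(G)$.

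First I would perform the formation-theoretic reductions. Passing to $G/O_{p'}(G)$ and using that $p$-supersolvability is inherited across a normal $p'$-subgroup (only the $p$-chief factors are constrained, and these all lie above $O_{p'}(G)$), I arrange $O_{p'}(G) = 1$. A minimal normal subgroup of $G$ is then characteristically simple with order divisible by $p$; the subnormality of the $s$-permutable subgroups of order $p^d$ is what I would use to exclude nonabelian composition factors, so that every minimal normal subgroup is an elementary abelian $p$-group and $C_G(O_p(G)) \le O_p(G)$. The target of these reductions is a \emph{primitive} configuration: a unique minimal normal subgroup $N = O_p(G) = F(G) = C_G(F(G))$, elementary abelian of order $p^m$, with $\Phi(G) = 1$ so that $N$ is complemented, $G = N \rtimes M$, and $M \cong G/N$ acting faithfully and irreducibly on $N$.

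The heart of the argument is then linear-algebraic. Regard $N$ as an $\mathbb{F}_p G$-module. By hypothesis every subgroup of $N$ of order $p^d$ is $s$-permutable in $G$, hence normalized by $O^p(G)$; that is, $O^p(G)$ stabilizes every $d$-dimensional subspace of $N$. Provided $1 \le d < m = \dim N$, any operator fixing every $d$-dimensional subspace fixes every line — each line being the intersection of the $d$-subspaces containing it — and is therefore scalar. Thus $O^p(G)$ acts on $N$ by scalars and leaves \emph{every} subspace of $N$ invariant. Since the $p$-group $P$ stabilizes a complete flag in $N$ and $G = P\,O^p(G)$, this flag is $G$-invariant with one-dimensional factors, contradicting the irreducibility of $N$ unless $m = 1$. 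Hence $N$ is cyclic of order $p$, and in the primitive structure $G/N$ embeds in $\mathrm{Aut}(N)$, a $p'$-group; so every $p$-chief factor of $G$ is cyclic, $G$ is $p$-supersolvable, and the contradiction is reached.

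The hard part will be the reduction step, because — unlike the hypotheses of Theorems~\ref{p-nilpotence_theorem} and~\ref{main_theorem} — the condition ``every subgroup of order $|D|$ is $s$-permutable'' is \emph{not} inherited by quotients: a subgroup of order $p^d$ in $P/N$ lifts to a subgroup of order $p^{d + \dim N}$ in $P$. Making the passage to the primitive case rigorous therefore requires manufacturing $s$-permutable subgroups of order $p^d$ in the relevant quotients, essentially a complement-type argument inside $p$-groups, and this is where the proof becomes delicate. Two further points need separate handling: the boundary regime $|D| \ge |N|$, where $N$ has no proper subgroup of order $p^d$ and the scalar argument does not apply directly; and the excepted case $p = 2$ with $P$ nonabelian and $|D| = 2$, where $s$-permutable subgroups of order $2$ carry too little information — this is precisely where the supplementary hypothesis on cyclic subgroups of order $4$ must be invoked to rule out quaternion-type obstructions.
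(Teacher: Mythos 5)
The first thing to note is that there is no internal proof to compare you against: the paper's ``proof'' of Lemma \ref{Yu2017} is the single line that it follows from \cite[Theorem 1.1]{Yu}, so the real benchmark is Yu's original argument, a substantial induction that itself leans on earlier machinery for the extreme cases $|D| = p$ and $|D| = |P|/p$. Your outline correctly identifies the standard engine of such proofs (every $s$-permutable subgroup is subnormal, hence every order-$|D|$ subgroup of $P$ lies in $O_p(G)$; $O^p(G)$ normalizes every $s$-permutable $p$-subgroup; a linear operator stabilizing all $d$-dimensional subspaces of an $\mathbb{F}_p$-space with $1 \le d < \dim$ is scalar), and that engine is sound as far as it goes. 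But what you have written is a program, not a proof: the two steps you yourself flag as ``delicate'' are exactly where the theorem lives, and the sketch contains no mechanism for closing either of them.

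Concretely, three gaps remain open. (i) The reduction to a primitive configuration ($\Phi(G)=1$, unique minimal normal subgroup $N = O_p(G) = C_G(N)$, complemented) is unproved, because the hypothesis is not quotient-closed. The easy salvage --- subgroups of order $|D|$ containing $N$ map onto $s$-permutable subgroups of order $|D|/|N|$ of $P/N$ --- works only when $|N| < |D|$, breaks down at the boundary $|N| = |D|$ (where the induced ``$D$'' is trivial and the hypothesis of the lemma fails), and in the case $p=2$, $|D|=2$ the order-$4$ condition must also be transported through the quotient. (ii) Even granting primitivity, your scalar argument covers only the regime $|D| < |N|$; in the complementary regime one must force $|N| = p$ directly, typically by producing, for each maximal subgroup $L$ of $N$, a subgroup $H \le P$ of order $|D|$ with $H \cap N = L$ and then exploiting that $O^p(G)$ normalizes such intersections --- and the existence of these ``complement-type'' subgroups inside $P$ is itself a nontrivial $p$-group lemma that your sketch only gestures at. (iii) The assertion that every minimal normal subgroup is an elementary abelian $p$-group (in effect, the $p$-solvability of $G$) is claimed but not argued: subnormality gives $\Omega$-type elements of $P$ inside $O_p(G)$, hence $O_p(G) \ne 1$, but this does not by itself exclude a nonabelian chief factor higher up, and ruling that out is genuine work in Yu's development. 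Unless (i)--(iii) are filled in --- and together they constitute essentially the whole of \cite{Yu} --- the proposal does not establish the lemma; citing \cite[Theorem 1.1]{Yu}, as the paper does, or importing its intermediate results explicitly, is the realistic route.
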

	
	\begin{proof}
		This follows from \cite[Theorem 1.1]{Yu}.	
	\end{proof} 
	
	Following Ballester-Bolinches et al. \cite{BB}, we say that a subgroup $H$ of a group $G$ is \textit{$c$-supplemented} in $G$ if there is a subgroup $K$ of $G$ such that $G = HK$ and $H \cap K \le H_G$, where $H_G$ is the core of $H$ in $G$. 
	
	\begin{lemma}
		\label{Asaad1}
		(\cite[Theorem 3.2 and Corollary 3.4]{Asaad1}) Let $P$ be a nontrivial normal $p$-subgroup of a group $G$. Suppose that there is a subgroup $D$ of $P$ with $1 \le |D| < |P|$ such that every subgroup of $P$ with order $|D|$ or $p|D|$ is $c$-supplemented in $G$. If $P$ is a nonabelian $2$-group and $|D| = 1$, suppose moreover that any cyclic subgroup of $P$ with order $4$ is $c$-supplemented in $G$. Then $P$ is contained in the supersolvable hypercentre $Z_{\mathcal{U}}(G)$ of $G$. 
	\end{lemma}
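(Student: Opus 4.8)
The plan is to prove the statement by induction on $|G|$, after reformulating the goal: since $P \trianglelefteq G$, one has $P \le Z_{\mathcal{U}}(G)$ if and only if every $G$-chief factor lying below $P$ is cyclic of order $p$. I would first record two standard tools. The first is a quotient lemma for $c$-supplementation: if $M \le G$ is $c$-supplemented in $G$ and $N \trianglelefteq G$ with $N \le M$, then $M/N$ is $c$-supplemented in $G/N$, with supplement $KN/N$, using Dedekind's law together with $M_G N/N \le (M/N)_{G/N}$. The second is the hypercentre quotient lemma: if $N \trianglelefteq G$ and $N \le Z_{\mathcal{U}}(G)$, then $Z_{\mathcal{U}}(G/N) = Z_{\mathcal{U}}(G)/N$, so that $P/N \le Z_{\mathcal{U}}(G/N)$ forces $P \le Z_{\mathcal{U}}(G)$.

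Next I would choose a minimal normal subgroup $N$ of $G$ with $N \le P$. As $P$ is a normal $p$-group, $N$ is elementary abelian, and since $Z(P)$ is characteristic in $P \trianglelefteq G$ we get $N \le Z(P)$. The induction then rests on a dichotomy on $|N|$, and the clean lemma I would isolate is: if $N$ is an abelian minimal normal subgroup of $G$ and $|N| > p$, then no subgroup $N_1$ with $1 < N_1 < N$ is $c$-supplemented in $G$. Indeed, minimality gives $(N_1)_G = 1$, so a supplement $K$ is in fact a complement, $G = N_1 K$ with $N_1 \cap K = 1$; but $N \cap K \trianglelefteq G$ (because $N$ is abelian and $G = N_1 K$ normalizes $N \cap K$), so minimality forces $N \cap K = 1$, and then $|N| \cdot |K| = |NK| \le |G| = |N_1| \cdot |K|$ gives $|N| \le |N_1|$, contradicting $N_1 < N$. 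Applying this to a subgroup of $N$ of order $|D|$ (when $|D| \ge p$) or of order $p|D|$ (when $|D| = 1$) shows that $|N| > p$ is incompatible with the hypotheses as soon as $|D| < |N|$; hence $|N| > p$ implies $|D| \ge |N|$.

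In the principal case $|N| = p$, the factor $N$ is a cyclic chief factor, so $N \le Z_{\mathcal{U}}(G)$, and I would transfer the hypotheses to $G/N$: taking $\bar{D}$ with $|\bar{D}| = \max(|D|/p,\,1)$, every subgroup of $P/N$ of order $|\bar{D}|$ or $p|\bar{D}|$ is the image of a subgroup of $P$ of order $|D|$ or $p|D|$ containing $N$, hence is $c$-supplemented in $G/N$ by the first tool; the inductive hypothesis together with the hypercentre quotient lemma then finishes. The two genuinely delicate points, which I expect to be the main obstacles, are the following. First, the residual case $|N| > p$ with $|D| \ge |N|$: here no proper nontrivial subgroup of $N$ has order $|D|$ or $p|D|$, so the lemma above no longer applies, and one must instead take a $c$-supplemented subgroup $X \le P$ of order $|D|$ (or $p|D|$) meeting $N$ in a maximal subgroup, pass to a supplement $K$ and its Sylow $p$-subgroup $Q = P \cap K$ (so that $P = XQ$ by Dedekind, the room being guaranteed by $|D| < |P|$), and exploit the irreducibility of $N$ as a $G$-module to show that $Q$ is forced inside $X$, contradicting $P = XQ$; this module-theoretic step is the crux. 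Second, the boundary case $|D| = 1$ (and the analogue $|\bar{D}| = 1$ arising in the descent): there the order-$p$ condition does not survive passage to $G/N$, since order-$p$ subgroups of $P/N$ lift to subgroups of order $p^2$ which the hypotheses do not control; this is exactly where the extra assumption that cyclic subgroups of order $4$ are $c$-supplemented (when $P$ is a nonabelian $2$-group) becomes indispensable, taming the cyclic $C_4$ preimages for $p = 2$, while for odd $p$ a separate argument through $\Omega_1(P)$ is needed.
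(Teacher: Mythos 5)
The first thing to note is that the paper does not prove this lemma at all: it is imported verbatim from Asaad (2012), cited as Theorem 3.2 and Corollary 3.4 of that paper, so your attempt must be measured against the literature proof rather than anything in this manuscript. Your scaffolding is the standard one and the supporting lemmas you state are correct: the quotient lemma for $c$-supplementation when $N \le M$, the hypercentre quotient lemma, the reduction of $P \le Z_{\mathcal{U}}(G)$ to cyclicity of $G$-chief factors below $P$, and in particular the key observation that no proper nontrivial subgroup of an abelian minimal normal subgroup of $G$ can be $c$-supplemented in $G$ (your proof of that is complete and correct). The inductive descent in the main case $|N| = p$ with $|D| \ge p$ also works as you describe, since every subgroup of $P/N$ of order $|\bar{D}|$ or $p|\bar{D}|$ has full preimage of order $|D|$ or $p|D|$ containing $N$.

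However, the proposal does not prove the theorem, because the two cases you yourself flag as ``delicate'' are precisely where all the mathematical content of Asaad's Theorem 3.2 lives, and in both you give a pointer rather than an argument. (i) In the residual case $|N| > p$, $|D| \ge |N|$: even the existence of a $c$-supplemented $X \le P$ of order $|D|$ with $X \cap N$ maximal in $N$ needs proof and can fail in general $p$-groups --- in $Q_8$ no subgroup of order $2$ avoids the central involution, which is exactly why the order-$4$ hypothesis exists --- and the announced ``module-theoretic step'' forcing $Q \le X$ against $P = XQ$ is the crux of the literature proof and is simply absent here. (ii) In the boundary case $|D| = 1$ (and $|\bar{D}| = 1$ arising in the descent from $|D| = p$): your diagnosis that the order-$p$ hypothesis does not survive passage to $G/N$ is correct, but ``a separate argument through $\Omega_1(P)$'' is a placeholder for what is in fact a substantial minimal-subgroup argument (in the literature, a Buckley--It\^{o}-type analysis of power automorphisms on subgroups of order $p$, adapted to $c$-supplementation). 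Moreover, in the descent from $|D| = 2$ you must also verify that cyclic subgroups of order $4$ in $P/N$ are $c$-supplemented in $G/N$: their preimages have order $8$ and are not covered by the hypotheses, and neither quotient lemma applies (your version needs $N \le M$; the other known variant needs $(|M|,|N|) = 1$), so this sub-case is open in your outline as well. In short: the skeleton is the right one and matches the standard approach, but the theorem's hard cases are named rather than proved, so the attempt has genuine gaps exactly where you predicted the obstacles would be.
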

	
	\begin{lemma}
		\label{Asaad2} 
		(\cite[Theorem 3.5]{Asaad1}) Let $p$ be the smallest prime dividing the order of a group $G$, and let $P$ be a Sylow $p$-subgroup of $G$. Suppose that there is a subgroup $D$ of $P$ with $1 \le |D| < |P|$ such that every subgroup of $P$ with order $|D|$ or $p|D|$ is $c$-supplemented in $G$. If $P$ is a nonabelian $2$-group and $|D| = 1$, suppose moreover that any cyclic subgroup of $P$ with order $4$ is $c$-supplemented in $G$. Then $G$ is $p$-nilpotent.  
	\end{lemma}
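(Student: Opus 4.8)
The plan is to argue by induction on $|G|$, taking $G$ to be a counterexample of least order. The first goal is the reduction $O_{p'}(G) = 1$. If $N := O_{p'}(G) \neq 1$, I would pass to $\overline{G} = G/N$: its Sylow $p$-subgroup is $\overline{P} = PN/N \cong P$, and since $N$ is a $p'$-group the subgroups of $\overline{P}$ of order $|D|$ or $p|D|$ are exactly the images $\overline{H} = HN/N$ of the corresponding subgroups $H \le P$. Using the standard fact that $c$-supplementation survives passage to $G/N$, these images are $c$-supplemented in $\overline{G}$, and the order-$4$ clause transfers likewise. Since $p$ is still the smallest prime dividing $|\overline{G}|$, minimality of $G$ forces $\overline{G}$ to be $p$-nilpotent, and then $G$ itself is $p$-nilpotent because $N$ is a $p'$-group. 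So we may assume $O_{p'}(G) = 1$.

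Next I would show that every proper subgroup of $G$ is $p$-nilpotent, so that $G$ is a minimal non-$p$-nilpotent group. The engine here is that $c$-supplementation passes to intermediate subgroups: if $H \le M \le G$ and $H$ is $c$-supplemented in $G$ via $G = HT$ with $H \cap T \le H_G$, then $T \cap M$ witnesses $c$-supplementation of $H$ in $M$, since $M = H(T \cap M)$ by the Dedekind law and $H \cap (T \cap M) \le H_G \cap M \le H_M$. Hence any proper subgroup $M$ containing a Sylow $p$-subgroup inherits the full hypotheses (the Sylow order, and hence the bound $|D| < |P|$, is unchanged) and $p$ remains the smallest prime dividing $|M|$, so $M$ is $p$-nilpotent by minimality. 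Granting that $G$ is thus minimal non-$p$-nilpotent, It\^{o}'s structure theorem for such groups gives $G = P \rtimes Q$ with $P$ a \emph{normal} Sylow $p$-subgroup and $Q$ cyclic. Now $P = O_p(G)$ is a nontrivial normal $p$-subgroup meeting the hypotheses of Lemma~\ref{Asaad1}, so that lemma yields $P \le Z_{\mathcal{U}}(G)$.

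To finish from $P = O_p(G) \le Z_{\mathcal{U}}(G)$, I would exploit that $p$ is the smallest prime. Every $G$-chief factor inside $P$ is cyclic of order $p$, so $G/C_G(\cdot)$ embeds in $\mathrm{Aut}(C_p) \cong C_{p-1}$, a group of order coprime to $|G|$; hence $G$ centralizes each of these factors. The automorphisms of $P$ centralizing every factor of a fixed chief series of $P$ form a $p$-group, so $G/C_G(P)$ is a $p$-group. Because $G$ is $p$-solvable with $O_{p'}(G) = 1$, the Hall--Higman lemma (\cite[Chapter 6, Theorem 3.2]{Gorenstein}) gives $C_G(P) \le O_p(G) = P$; combining, $|G|$ is a power of $p$, i.e. $G = P$, contradicting $Q \neq 1$. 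This contradiction completes the argument.

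The main obstacle I anticipate is the bookkeeping needed to justify that $G$ really is minimal non-$p$-nilpotent: the hypotheses are attached to subgroups of a \emph{prescribed} order $|D|$, together with cyclic subgroups of order $4$ when $P$ is a nonabelian $2$-group, so while they descend cleanly to overgroups of a Sylow $p$-subgroup, one must argue carefully for subgroups whose Sylow $p$-subgroup is too small for the condition $|D| < |P|$ to carry over, and one must check that the order-$4$ clause is preserved under all reductions. Handling this exceptional $2$-group case, and confirming that the reduction to a Schmidt group is legitimate, is where the real work lies; the remaining steps are the coprimality and stability-group arguments above, which are routine once $P$ is known to be normal.
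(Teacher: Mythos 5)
The paper offers no proof of this lemma at all---it is imported verbatim as \cite[Theorem 3.5]{Asaad1}---so your attempt has to stand entirely on its own. Much of it does: the reduction to $O_{p'}(G)=1$ is fine ($c$-supplementation of a $p$-subgroup does pass to $G/N$ for a normal $p'$-subgroup $N$, and the order-$|D|$ data, the order-$4$ clause and the smallest-prime condition all transfer as you say); the inheritance of $c$-supplementation by intermediate subgroups via the Dedekind law is correct; and the endgame is sound: once one knows $P = O_p(G) \trianglelefteq G$, Lemma \ref{Asaad1} gives $P \le Z_{\mathcal{U}}(G)$, the coprimality $(|G|,p-1)=1$ forces $G$ to centralize every chief factor below $P$, the stability-group theorem makes $G/C_G(P)$ a $p$-group, and Hall--Higman then yields $G = P$, the desired contradiction.

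The genuine gap is exactly the step you postpone with ``Granting that $G$ is thus minimal non-$p$-nilpotent'': this is not bookkeeping but the crux, and for $|D|>1$ the argument fails as stated. Minimality of the counterexample reaches only those proper subgroups $M$ that inherit the hypotheses, namely those whose Sylow $p$-subgroup has order greater than $|D|$. A proper subgroup whose Sylow $p$-subgroup is nontrivial, non-cyclic and of order at most $|D|$ is simply invisible to the hypotheses. (Cyclic Sylow $p$-subgroups are harmless: since $(|M|,p-1)=1$, Burnside's normal $p$-complement theorem makes such $M$ $p$-nilpotent automatically; non-cyclic ones are not covered.) Concretely, with $p=2$, $|P|=16$ and $|D|=8$, nothing in the hypotheses rules out, a priori, a subgroup of $G$ isomorphic to $A_4$, which is not $2$-nilpotent; so ``every proper subgroup of $G$ is $p$-nilpotent'' is not established, It\^{o}'s theorem cannot be invoked, and the normality of $P$---on which your application of Lemma \ref{Asaad1} rests---is unavailable. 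Passing instead to a minimal non-$p$-nilpotent subgroup $K$ of $G$ does not rescue the plan, because the $c$-supplemented subgroups of order $|D|$ or $p|D|$ need not lie inside $K$, so your intermediate-subgroup lemma has nothing to act on. Your scheme is actually complete in the boundary case $|D|=1$, where every subgroup of order $p$ or cyclic of order $4$ of a Sylow $p$-subgroup of any $M \le G$ is covered by the hypotheses and all proper subgroups really are $p$-nilpotent by induction; but for general $|D|$ one needs a different induction, for instance one running through normal subgroups and quotients in the spirit of Asaad's own treatment, rather than a reduction to Schmidt groups.
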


	\section{Proof of Theorem \ref{main_theorem}} 
	\label{proof_main_theorem} 
	\begin{proof}[Proof of Theorem \ref{main_theorem}] Suppose that the theorem is not true, and let $G$ be a counterexample with minimal order. We will derive a contradiction in several steps. 
		
		\medskip
		(1) \textit{$G/N$ is $p$-supersolvable for any nontrivial normal subgroup $N$ of $G$.}
		
		Let $1 \ne N \trianglelefteq G$ and $\overline G := G/N$. Then $\overline G$ is $p$-solvable, $\overline P$ is a Sylow $p$-subgroup of $\overline G$, we have $\overline{P}' = \overline{P'} \le \overline{H} \le \overline{\Phi(P)} = \Phi(\overline P)$, and $\overline H$ is $s$-semipermutable in $\overline G$ by Lemma \ref{BerkovichIsaacsLemma} (1). Also $N_{\overline G}(\overline P) = \overline{N_G(P)}$ is $p$-supersolvable. The minimality of $G$ implies that $\overline G$ is $p$-supersolvable. 
		
		\medskip	
		(2) $O_{p'}(G) = 1$.
		
		Assume that $O_{p'}(G) \ne 1$. Then $G/O_{p'}(G)$ is $p$-supersolvable by (1), which implies that $G$ is $p$-supersolvable. This contradiction shows that $O_{p'}(G) = 1$. 
		
		\medskip
		(3) \textit{We have $\Phi(G) = 1$, and $O_p(G)$ is the unique minimal normal subgroup of $G$.}  
		
		Assume that $\Phi(G) \ne 1$. Then $G/\Phi(G)$ is $p$-supersolvable by (1). Applying \cite[Kapitel VI, Satz 8.6 a)]{Huppert}, we conclude that $G$ is $p$-supersolvable. This contradiction shows that $\Phi(G) = 1$. 
		
		Let $N_1, \dots, N_t$ be the distinct minimal normal subgroups of $G$. From \cite[Theorem 1.8.17]{Guo}, we see that $O_p(G) = N_1 \times \dots \times N_t$. To finish the proof of (3), it is enough to show that $t = 1$. 
		
		Assume that $t > 1$. By \cite[Theorem 1.3.7]{Guo}, $G \cong G/(N_1 \cap N_2)$ is isomorphic to a subgroup of $(G/N_1) \times (G/N_2)$. Since $G/N_1$ and $G/N_2$ are $p$-supersolvable by (1), we have that $(G/N_1) \times (G/N_2)$ is $p$-supersolvable. It follows that $G$ is $p$-supersolvable. This contradiction yields $t = 1$. 
		
		\medskip
		(4) $H \cap O_p(G) = 1.$
		
		Set $U := H \cap O_p(G)$. As $P' \le H$, we have $H \trianglelefteq P$. Hence, $U$ is the intersection of two normal subgroups of $P$, so $U \trianglelefteq P$ and thus $P \le N_G(U)$. By Lemma \ref{BerkovichIsaacsLemma} (2), we also have $O^p(G) \le N_G(U)$. Thus $G = PO^p(G) \le N_G(U)$ and hence $U \trianglelefteq G$. 
		
		Since $O_p(G)$ is minimal normal in $G$ by (3), we either have $U = 1$ or $U = O_p(G)$. Assume that $U = O_p(G)$. Then $O_p(G) \le H \le \Phi(P)$, and \cite[Kapitel III, Hilfssatz 3.3]{Huppert} implies that $O_p(G) \le \Phi(G)$, which contradicts (3). So we have $U = 1$, as required. 
		
		\medskip
		(5) \textit{Each maximal subgroup of $G$ contains $O_p(G)$ or $P'$.} 
		
		Let $M$ be a maximal subgroup of $G$ such that $O_p(G) \not\le M$. We have to show that $P' \le M$. 
		
		By (4), we have $(P \cap M)' \cap O_p(G) \le P' \cap O_p(G) \le H \cap O_p(G) = 1$. Thus 
		\begin{equation*}
			(P \cap M)' \cap O_p(G) = P' \cap O_p(G) = 1. 
		\end{equation*} 
		
		As $O_p(G) \not\le M$, we have $G = MO_p(G)$. So $P = P \cap G = P \cap MO_p(G) = (P \cap M)O_p(G)$. This implies that 
		\begin{equation*} 
			(P/O_p(G))' = (P \cap M)'O_p(G)/O_p(G) \cong (P \cap M)'/((P \cap M)' \cap O_p(G)) \cong (P \cap M)'.
		\end{equation*}  
		On the other hand, we have 
		\begin{equation*} 
			(P/O_p(G))' = P'O_p(G)/O_p(G) \cong P'/(P' \cap O_p(G)) \cong P'.
		\end{equation*}  
		So we have $(P \cap M)' \cong (P/O_p(G))' \cong P'$. As $(P \cap M)' \le P'$, it follows that $P' = (P \cap M)' \le M$. This completes the proof of (5). 
		
		\medskip
		(6) \textit{The final contradiction.} 
		
		Let $\mathscr{M}$ denote the set of all maximal subgroups $M$ of $G$ with $O_p(G) \not\le M$. We have $\mathscr{M} \ne \varnothing$, because otherwise $O_p(G) \le \Phi(G)$, which contradicts (3). Set 
		\begin{equation*}
			M^* := \bigcap_{M \in \mathscr{M}} M. 
		\end{equation*}
		Since $\mathscr{M}$ is invariant under conjugation with elements of $G$, we have $M^* \trianglelefteq G$.
		
		Let $M \in \mathscr{M}$. Since $O_p(G)$ is the unique minimal normal subgroup of $G$ by (3) and since $O_p(G)$ is not contained in $M$, we have $M_G = 1$. As $M^* \le M_G$, it follows that $M^* = 1$. 
		
		From (5), we see that each member of $\mathscr{M}$ contains $P'$. Therefore, we have $P' \le M^* = 1$, and so $P$ is abelian. 
		
		Since $G$ is $p$-solvable and $O_{p'}(G) = 1$, we have $C_G(O_p(G)) \le O_p(G)$ by \cite[Chapter 6, Theorem 3.2]{Gorenstein}. As $P$ is abelian, it follows that $P = O_p(G)$. Consequently, we have $G = N_G(O_p(G)) = N_G(P)$, and so $G$ is $p$-supersolvable by the hypotheses of the theorem. This final contradiction completes the proof.  
	\end{proof} 
	
	\section{A new proof of a special case of Theorem \ref{p-nilpotence_theorem}}
	\label{proof_special_case} 
	A special case of Theorem \ref{p-nilpotence_theorem} is the statement that a group $G$ with Sylow $p$-subgroup $P$ is $p$-nilpotent provided that $N_G(P)$ is $p$-nilpotent and that there is a subgroup $H$ of $P$ with $P' \le H \le \Phi(P)$ such that $H$ is $s$-semipermutable in $G$. In this section, we use Theorem \ref{main_theorem} to give a new proof of this fact, which is somewhat shorter than the proof of Theorem \ref{p-nilpotence_theorem}.
	
	\begin{theorem}
		Let $G$ be a group, and let $P$ be a Sylow $p$-subgroup of $G$. Suppose that there is a subgroup $H$ of $P$ such that $P' \le H \le \Phi(P)$ and such that $H$ is $s$-semipermutable in $G$. Suppose moreover that $N_G(P)$ is $p$-nilpotent. Then $G$ is $p$-nilpotent. 
	\end{theorem}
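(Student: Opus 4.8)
The plan is to prove $p$-solvability of $G$ first, then invoke Theorem \ref{main_theorem} to upgrade to $p$-supersolvability, and finally use the normalizer hypothesis a second time to pass from $p$-supersolvability to $p$-nilpotency. I would argue by taking a counterexample $G$ of minimal order. The first routine reduction is to show $O_{p'}(G) = 1$. Passing to $\overline G := G/O_{p'}(G)$ preserves every hypothesis: $\overline P$ is a Sylow $p$-subgroup with $(\overline P)' = \overline{P'} \le \overline H \le \overline{\Phi(P)} \le \Phi(\overline P)$, the image $\overline H$ is $s$-semipermutable by Lemma \ref{BerkovichIsaacsLemma}(1), and $N_{\overline G}(\overline P) = \overline{N_G(P)}$ is $p$-nilpotent as a quotient of a $p$-nilpotent group. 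By minimality $\overline G$ is $p$-nilpotent, and since $O_{p'}(G)$ is a normal $p'$-group this lifts to $p$-nilpotency of $G$, a contradiction. Hence $O_{p'}(G) = 1$.

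The crux is establishing $p$-solvability. Since $H$ is a $p$-subgroup, Lemma \ref{BerkovichIsaacsLemma}(4) gives that its normal closure $H^G$ is solvable. I would then look at $\widetilde G := G/H^G$: here the Sylow $p$-subgroup $\widetilde P$ is \emph{abelian}, because $P' \le H \le H^G$ forces $(\widetilde P)' = 1$. A standard Frattini-type identity (using that $P$ is Sylow and $H^G \trianglelefteq G$) gives $N_{\widetilde G}(\widetilde P) = \widetilde{N_G(P)}$, which is $p$-nilpotent. Now I would invoke the classical normal $p$-complement theorem for abelian Sylow subgroups: a group with abelian Sylow $p$-subgroup whose normalizer is $p$-nilpotent is itself $p$-nilpotent. (Concretely, $N_{\widetilde G}(\widetilde P)$ controls $p$-fusion by Burnside, so the focal subgroup theorem yields $\widetilde P \cap \widetilde G' = \widetilde P \cap N_{\widetilde G}(\widetilde P)' = 1$; hence $\widetilde G'$ is a $p'$-group and $\widetilde G$ has a normal $p$-complement.) Thus $\widetilde G = G/H^G$ is $p$-nilpotent, in particular $p$-solvable, and since $H^G$ is solvable, $G$ is $p$-solvable.

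With $p$-solvability in hand, all the hypotheses of Theorem \ref{main_theorem} are satisfied: $N_G(P)$ is $p$-nilpotent and therefore $p$-supersolvable, while $P' \le H \le \Phi(P)$ with $H$ $s$-semipermutable. Theorem \ref{main_theorem} then yields that $G$ is $p$-supersolvable. To conclude, I would use the well-known fact that a $p$-supersolvable group has $p$-length at most $1$; together with the reduction $O_{p'}(G) = 1$ this forces $G/O_p(G)$ to be a $p'$-group, so that $P = O_p(G)$ is a normal Sylow $p$-subgroup of $G$. But then $N_G(P) = G$, which is $p$-nilpotent by hypothesis, contradicting the choice of $G$ as a counterexample.

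I expect the $p$-solvability step to be the main obstacle, specifically the device of quotienting by $H^G$ to manufacture an abelian Sylow $p$-subgroup; the points needing care are the identification $N_{\widetilde G}(\widetilde P) = \widetilde{N_G(P)}$ and the abelianness of $\widetilde P$, after which the abelian-Sylow normal $p$-complement theorem does the work. By comparison, the passage from $p$-supersolvability to $p$-nilpotency is soft, relying only on the $p$-length-$1$ property of $p$-supersolvable groups and the reduction $O_{p'}(G) = 1$.
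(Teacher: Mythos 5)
Your proposal is correct, and it takes a genuinely different (in fact shorter) route than the paper's proof. The paper also begins with a minimal counterexample and the reduction $O_{p'}(G) = 1$, but then splits into cases according to whether $PH^G = G$: if so, $G$ is solvable (since $G/H^G \cong P/(P \cap H^G)$ is a $p$-group and $H^G$ is solvable by Lemma \ref{BerkovichIsaacsLemma}~(4)), and Theorem \ref{main_theorem} plus \cite[Lemma 2.1.6]{Products} finishes exactly as in your endgame; if $PH^G < G$, minimality is applied to the proper subgroup $PH^G$ (using Lemma \ref{BerkovichIsaacsLemma}~(3)) to show $H^G$ is a $p$-group, then Lemma \ref{BerkovichIsaacsLemma}~(2) yields $H \trianglelefteq G$, the abelian-Sylow Burnside criterion is applied to $G/H$, and the proof closes with $H \le \Phi(G)$ via \cite[Kapitel III, Hilfssatz 3.3]{Huppert} and the Frattini-quotient lemma \cite[Kapitel VI, Hilfssatz 6.3]{Huppert}. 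Your key move --- applying the abelian-Sylow normal $p$-complement theorem to $G/H^G$ rather than to $G/H$ --- is what makes the difference: the Sylow subgroup $PH^G/H^G$ is \emph{automatically} abelian because $P' \le H \le H^G$, so you never need $H \trianglelefteq G$. This collapses the case distinction, eliminates the inductive step on $PH^G$, and removes the need for Lemma \ref{BerkovichIsaacsLemma}~(2), (3) and the two Frattini-subgroup lemmas from \cite{Huppert}, using minimality only for the routine $O_{p'}$-reduction. Your supporting details all check out: the identity $N_{G/N}(PN/N) = N_G(P)N/N$ holds for any $N \trianglelefteq G$ by the Frattini argument, your fusion/focal-subgroup sketch of the abelian-Sylow criterion is sound (the paper invokes the same classical fact via \cite[Chapter 7, Theorem 4.3]{Gorenstein}), $p$-solvability passes through the extension $H^G \le G$, and the final descent via $p$-length at most $1$ mirrors the paper's use of \cite[Lemma 2.1.6]{Products}. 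In short, both proofs rest on the same four pillars --- Isaacs' solvability of $H^G$, the Burnside-type criterion, Theorem \ref{main_theorem}, and the $p$-length-$1$ property of $p$-supersolvable groups --- but your arrangement reaches $p$-solvability of $G$ in one stroke and is, if anything, cleaner than the published argument.
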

	
	\begin{proof}
		Suppose that the theorem is false, and let $G$ be a minimal counterexample.
		
		Using Lemma \ref{BerkovichIsaacsLemma} (1), we see that the hypotheses of the theorem carry over to $G/O_{p'}(G)$. Therefore, if $O_{p'}(G) \ne 1$, then $G/O_{p'}(G)$ is $p$-nilpotent by the minimality of $G$, which implies that $G$ is $p$-nilpotent. Thus $O_{p'}(G) = 1$. 
		
		Suppose that $PH^G = G$. Then $G/H^G \cong P/(P \cap H^G)$ is solvable, and $H^G$ is solvable by Lemma \ref{BerkovichIsaacsLemma} (4). Consequently, $G$ is solvable. By hypothesis, $N_G(P)$ is $p$-nilpotent and hence $p$-supersolvable. So Theorem \ref{main_theorem} implies that $G$ is $p$-supersolvable. Since $O_{p'}(G) = 1$, it follows from \cite[Lemma 2.1.6]{Products} that $P \trianglelefteq G$. So we have that $G = N_G(P)$ is $p$-nilpotent. This contradiction shows that $PH^G$ is a proper subgroup of $G$. 
		
		Using Lemma \ref{BerkovichIsaacsLemma} (3), we see that $PH^G$ satisfies the hypotheses of the theorem. So $PH^G$ is $p$-nilpotent by the minimality of $G$. Hence, $H^G$ is $p$-nilpotent. As $O_{p'}(H^G) \le O_{p'}(G) = 1$, it follows that $H^G$ is a $p$-group. Lemma \ref{BerkovichIsaacsLemma} (2) implies that $H = H \cap H^G$ is normalized by $O^p(G)$. Since $H$ is also normalized by $P$ and $G = PO^p(G)$, it follows that $H \trianglelefteq G$. 
		
		Since $P/H$ is abelian and $N_{G/H}(P/H) = N_G(P)/H$ is $p$-nilpotent, we have that $G/H$ is $p$-nilpotent by Burnside's $p$-nilpotency criterion \cite[Chapter 7, Theorem 4.3]{Gorenstein}. As $H \le \Phi(P)$, we have $H \le \Phi(G)$ by \cite[Kapitel III, Hilfssatz 3.3]{Huppert}. It follows that $G/\Phi(G)$ is $p$-nilpotent. Applying \cite[Kapitel VI, Hilfssatz 6.3]{Huppert}, we conclude that $G$ is $p$-nilpotent. This contradiction completes the proof. 
	\end{proof}

	\section{Applications}
	\label{applications} 
	In this section, we use Theorems \ref{p-nilpotence_theorem} and \ref{main_theorem} to generalize some recent results on pronormal subgroups and weakly $\mathscr{H}$-subgroups obtained in \cite{LiuYu} and \cite{Chen} and to shorten their proofs. 
	
	Recall that a subgroup $H$ of a group $G$ is said to be \textit{pronormal} in $G$ if $H$ and $H^g$ are conjugate in $\langle H, H^g \rangle$ for each $g \in G$. Examples of pronormal subgroups are normal subgroups, Sylow subgroups and Hall subgroups of solvable groups. 
	
	Recently, Liu and Yu \cite{LiuYu} proved the following theorem. 
	
	\begin{theorem}
		(\cite[Theorem 1.3]{LiuYu})
		\label{LiuYu} 
		Let $G$ be a group of order divisible by $p$ such that $(|G|,p-1) = 1$, and let $P$ be a Sylow $p$-subgroup of $G$. Suppose that $P' \trianglelefteq G$ and that $P$ has a subgroup $D$ with $1 < |D| < |P|$ such that any subgroup of $P$ with order $|D|$ is pronormal in $N_G(P)$. If $P$ is a nonabelian $2$-group and $|D| = 2$, suppose moreover that any cyclic subgroup of $P$ with order $4$ is pronormal in $N_G(P)$. Then $G$ is $p$-nilpotent. 
	\end{theorem}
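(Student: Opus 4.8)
The plan is to run an induction on $|G|$, using the already-established $p$-nilpotence criterion of Section~\ref{proof_special_case} as the engine and the hypothesis $P' \trianglelefteq G$ to feed it. The point of departure is the observation that, since $P' \trianglelefteq G$, the subgroup $H := P'$ is normal in $G$ and hence $s$-semipermutable in $G$, while trivially $P' \le H = P' \le \Phi(P)$. Thus the theorem of Section~\ref{proof_special_case} applies to $G$ \emph{as soon as} $N_G(P)$ is $p$-nilpotent, and in that case it yields that $G$ is $p$-nilpotent. So the whole statement reduces to proving that $N_G(P)$ is $p$-nilpotent, and this is where Theorem~\ref{main_theorem} enters, albeit indirectly through the criterion of Section~\ref{proof_special_case}.

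To obtain this I would argue by induction on $|G|$. If $N_G(P)$ is a proper subgroup of $G$, I would check that $N_G(P)$ inherits all the hypotheses: the arithmetic condition $(|N_G(P)|, p-1) = 1$ holds because $|N_G(P)|$ divides $|G|$; we have $P' \trianglelefteq N_G(P)$; and, crucially, $N_{N_G(P)}(P) = N_G(P)$, so the assumption that every subgroup of $P$ of order $|D|$ is pronormal in $N_G(P)$ is \emph{exactly} the corresponding hypothesis for the group $N_G(P)$ (likewise for the cyclic subgroups of order $4$ in the exceptional case). By the inductive hypothesis $N_G(P)$ is then $p$-nilpotent, and the previous paragraph finishes the proof. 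This reduces everything to the base case $N_G(P) = G$, that is, to the situation $P \trianglelefteq G$.

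In the base case I would write $G = P \rtimes K$ with $K$ a $p$-complement (Schur--Zassenhaus), so that $G$ being $p$-nilpotent is equivalent to $K$ centralising $P$. I would exploit pronormality through the following reformulation: if $L \le P \trianglelefteq G$ is pronormal in $G$, then for each $g \in G$ the subgroups $L$ and $L^g \le P$ are conjugate inside $\langle L, L^g\rangle \le P$, giving the Frattini-type identity $G = N_G(L)P$; applying Schur--Zassenhaus inside $N_G(L)$ then shows that $L$ is normalised by some complement of $P$ in $G$. Since all complements are $P$-conjugate, every subgroup of $P$ of order $|D|$ has a $P$-conjugate that is $K$-invariant. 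After factoring out $O_{p'}(G)$ one may assume $K$ acts faithfully on $P$, and the arithmetic hypothesis forces this action to be trivial on every $K$-invariant section of order $p$, because $\mathrm{Aut}(C_p) \cong C_{p-1}$ and $(|K|, p-1) = 1$. A coprime-action argument---climbing a $K$-invariant chief series of $P$ and using that $K$ centralises each order-$p$ factor it stabilises---should then yield $[P,K] = 1$.

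The main obstacle is precisely this base case. Pronormality of a subgroup of the normal Sylow subgroup $P$ does \emph{not} by itself make that subgroup $s$-permutable in $G$ when $P$ is nonabelian (a given complement normalises only one conjugate of the subgroup), so Lemma~\ref{Yu2017} cannot merely be quoted; the genuine work is to convert the local information ``each order-$|D|$ subgroup has \emph{some} $K$-invariant conjugate'' into the global statement ``$K$ centralises $P$.'' This is where the full strength of assuming \emph{all} subgroups of order $|D|$ to be pronormal must be combined with $(|G|,p-1)=1$. I also expect the nonabelian $2$-group case with $|D| = 2$ to need separate treatment via the supplementary hypothesis on cyclic subgroups of order $4$, exactly as this case is singled out in Lemma~\ref{Yu2017}.
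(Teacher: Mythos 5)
Your reduction is sound and matches the paper's strategy: taking $H = P'$ (so $P' \le H \le \Phi(P)$ automatically, and normality of $P'$ in $G$ gives $s$-semipermutability) reduces everything to showing that $N_G(P)$ is $p$-nilpotent, and your induction correctly collapses this to the case $P \trianglelefteq G$ since $N_{N_G(P)}(P) = N_G(P)$. The genuine gap is in that base case, and it stems from a mistaken premise: you assert that pronormality of a subgroup of the normal Sylow subgroup ``does not by itself make that subgroup $s$-permutable,'' so that Lemma \ref{Yu2017} cannot be quoted. But it does: every subgroup $L$ of $P$ is subnormal in $N_G(P)$ (it is subnormal in $P$, and $P \trianglelefteq N_G(P)$), and a subgroup that is simultaneously subnormal and pronormal is \emph{normal} by \cite[Chapter I, Lemma 6.3 (d)]{DoerkHawkes}. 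Hence ``pronormal in $N_G(P)$'' is equivalent to ``normal in $N_G(P)$,'' and normal subgroups are in particular $s$-permutable. So Lemma \ref{Yu2017} applies directly to $N_G(P)$, giving that $N_G(P)$ is $p$-supersolvable; since $(|N_G(P)|, p-1) = (|G|,p-1) = 1$, the result \cite[Lemma 2.4 (4)]{ChangwenLi} upgrades this to $p$-nilpotence of $N_G(P)$, and your first paragraph finishes. This is exactly the paper's route (it derives Theorem \ref{LiuYu} as a special case of Theorem \ref{LiuYu_gen}), and it makes your induction and base-case analysis unnecessary.

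Without this observation, your base-case sketch does not go through as written. From ``every subgroup of order $|D|$ has some $K$-invariant conjugate'' you propose to climb a $K$-invariant chief series of $P$, centralizing each factor via $(|K|, p-1) = 1$. But that arithmetic condition only forces $K$ to centralize factors of order exactly $p$; a chief series of $G = P \rtimes K$ below $P$ has elementary abelian factors of a priori arbitrary rank, and coprimality to $p-1$ does not centralize those: in $A_4 = V_4 \rtimes C_3$ with $p = 2$ the condition $(|G|, p-1) = 1$ holds vacuously, yet $C_3$ acts irreducibly and nontrivially on the rank-two factor $V_4$. Producing a $K$-invariant series of $P$ with all factors of order $p$ is precisely the $p$-supersolvability statement you are trying to prove, i.e., the content of Yu's theorem behind Lemma \ref{Yu2017}. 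So the step you defer as ``the genuine work'' is not a routine coprime-action argument but essentially the whole theorem; your proposal is incomplete there, while the subnormal-plus-pronormal reformulation renders the entire difficulty moot.
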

	
	By \cite[Chapter I, Lemma 6.3 (d)]{DoerkHawkes}, a subgroup $H$ of a group $G$ is normal in $G$ provided that $H$ is both subnormal and pronormal in $G$. If $P$ is a Sylow $p$-subgroup of a group $G$, then any subgroup of $P$ is subnormal in $N_G(P)$ since any subgroup of $P$ is subnormal in $P$ and $P \trianglelefteq N_G(P)$. Consequently, a subgroup of $P$ is pronormal in $N_G(P)$ if and only if it is normal in $N_G(P)$. Therefore, when we replace the word “pronormal” by the word “normal” in Theorem \ref{LiuYu}, then what we obtain is just a reformulation of Theorem \ref{LiuYu}. 
	
	As is well-known, any normal subgroup of a group $G$ is $s$-permutable in $G$, but the converse is not true. In the next theorem, we take the hypotheses of Theorem \ref{LiuYu}, but we replace pronormality by $s$-permutability, and we also weaken the condition $P' \trianglelefteq G$. 
	
	\begin{theorem}
		\label{LiuYu_gen}
		Let $G$ be a group of order divisible by $p$ such that $(|G|,p-1) = 1$, and let $P$ be a Sylow $p$-subgroup of $G$. Assume that there is a normal subgroup $H$ of $P$ with $H \le \Phi(P)$ and $P/H = Z_{p-1}(P/H)$ such that $H$ is $s$-semipermutable in $G$. Suppose that $P$ has a subgroup $D$ with $1 < |D| < |P|$ such that any subgroup of $P$ with order $|D|$ is $s$-permutable in $N_G(P)$. If $P$ is a nonabelian $2$-group and $|D| = 2$, suppose moreover that any cyclic subgroup of $P$ with order $4$ is $s$-permutable in $N_G(P)$. Then $G$ is $p$-nilpotent. 
	\end{theorem}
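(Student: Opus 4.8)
The plan is to reduce the statement to Theorem \ref{p-nilpotence_theorem} by first showing that $N_G(P)$ is $p$-nilpotent and then invoking that theorem with the given subgroup $H$. The key observation is that the hypotheses on the subgroups of order $|D|$ are tailored to Lemma \ref{Yu2017}, while the remaining hypotheses on $H$ are exactly those of Theorem \ref{p-nilpotence_theorem}.

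First I would apply Lemma \ref{Yu2017} to the group $N_G(P)$ in place of $G$. Since $P \trianglelefteq N_G(P)$ and $p \mid |G|$, the subgroup $P$ is a Sylow $p$-subgroup of $N_G(P)$ and $|N_G(P)|$ is divisible by $p$. By hypothesis every subgroup of $P$ of order $|D|$ is $s$-permutable in $N_G(P)$, with $1 < |D| < |P|$, and in the exceptional case ($P$ a nonabelian $2$-group and $|D| = 2$) every cyclic subgroup of $P$ of order $4$ is $s$-permutable in $N_G(P)$. These are precisely the hypotheses of Lemma \ref{Yu2017} applied to $N_G(P)$, so that lemma yields that $N_G(P)$ is $p$-supersolvable.

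Next I would upgrade this to $p$-nilpotency using the coprimality assumption. Since $|N_G(P)|$ divides $|G|$, we have $(|N_G(P)|, p-1) = 1$. A $p$-supersolvable group whose order is coprime to $p-1$ is $p$-nilpotent: in such a group every $p$-chief factor is cyclic of order $p$, and the conjugation action on such a factor maps the group into $\operatorname{Aut}(C_p) \cong C_{p-1}$, so coprimality forces this action to be trivial; the group therefore centralizes all its $p$-chief factors, and an induction on the order (splitting off a minimal normal subgroup, and using Schur--Zassenhaus in the case where that subgroup is a central $C_p$) produces a normal $p$-complement. Hence $N_G(P)$ is $p$-nilpotent. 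Finally, the remaining data match Theorem \ref{p-nilpotence_theorem} verbatim: there is a normal subgroup $H$ of $P$ with $H \le \Phi(P)$ and $P/H = Z_{p-1}(P/H)$ such that $H$ is $s$-semipermutable in $G$, and $N_G(P)$ is now known to be $p$-nilpotent. Applying Theorem \ref{p-nilpotence_theorem} gives that $G$ is $p$-nilpotent.

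I expect the only non-bookkeeping step to be the passage from $p$-supersolvability to $p$-nilpotency of $N_G(P)$; everything else is a direct matching of hypotheses to Lemma \ref{Yu2017} and Theorem \ref{p-nilpotence_theorem}. If one prefers not to reprove the coprimality fact inline, it can simply be quoted as a standard result on $p$-supersolvable groups.
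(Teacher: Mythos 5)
Your proposal is correct and takes essentially the same route as the paper: apply Lemma \ref{Yu2017} to $N_G(P)$ to get $p$-supersolvability, use $(|N_G(P)|,p-1)=(|G|,p-1)=1$ to upgrade this to $p$-nilpotency (the paper simply cites \cite[Lemma 2.4 (4)]{ChangwenLi} for this step, which you instead sketch correctly inline), and then conclude via Theorem \ref{p-nilpotence_theorem}. Nothing is missing.
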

	
	\begin{proof}
		By Theorem \ref{p-nilpotence_theorem}, it suffices to show that $N_G(P)$ is $p$-nilpotent. Lemma \ref{Yu2017} shows that $N_G(P)$ is $p$-supersolvable. We have $(|N_G(P)|,p-1) = (|G|,p-1) = 1$, and so \cite[Lemma 2.4 (4)]{ChangwenLi} 
		implies that $N_G(P)$ is $p$-nilpotent, as required.
	\end{proof}
	
	Note that Theorem \ref{LiuYu} is covered by Theorem \ref{LiuYu_gen}. In the next theorem, we again take the hypotheses of Theorem \ref{LiuYu}, but we remove the condition that $(|G|,p-1) = 1$ and add the condition that $G$ is $p$-solvable. We also weaken the condition that $P' \trianglelefteq G$, and we replace pronormality by $s$-permutability. 
	
	\begin{theorem}
		Let $G$ be a $p$-solvable group, and let $P$ be a Sylow $p$-subgroup of $G$. Assume that there is a subgroup $H$ of $P$ such that $P' \le H \le \Phi(P)$ and such that $H$ is $s$-semipermutable in $G$. Suppose that $P$ has a subgroup $D$ with $1 < |D| < |P|$ such that any subgroup of $P$ with order $|D|$ is $s$-permutable in $N_G(P)$. If $P$ is a nonabelian $2$-group and $|D| = 2$, suppose moreover that any cyclic subgroup of $P$ with order $4$ is $s$-permutable in $N_G(P)$. Then $G$ is $p$-supersolvable.  
	\end{theorem}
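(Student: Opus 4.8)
The plan is to deduce this theorem directly from Theorem \ref{main_theorem} and Lemma \ref{Yu2017}, in close analogy to the proof of Theorem \ref{LiuYu_gen}. Since $G$ is $p$-solvable and there is a subgroup $H$ with $P' \le H \le \Phi(P)$ that is $s$-semipermutable in $G$, Theorem \ref{main_theorem} reduces the problem to showing that $N_G(P)$ is $p$-supersolvable; once this is established, the desired conclusion follows immediately.

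To show that $N_G(P)$ is $p$-supersolvable, I would apply Lemma \ref{Yu2017} to the group $N_G(P)$. First I would observe that $P$ is a Sylow $p$-subgroup of $N_G(P)$: indeed $P \trianglelefteq N_G(P) \le G$ and $P$ is a Sylow $p$-subgroup of $G$, so $P$ is in fact the unique Sylow $p$-subgroup of $N_G(P)$. Moreover, $p$ divides $|N_G(P)|$, since $1 < |D| < |P|$ forces $P \ne 1$. The subgroup $D$ of $P$ then satisfies $1 < |D| < |P|$, and by hypothesis every subgroup of $P$ of order $|D|$ is $s$-permutable in $N_G(P)$; likewise, in the exceptional case where $P$ is a nonabelian $2$-group with $|D| = 2$, every cyclic subgroup of $P$ of order $4$ is $s$-permutable in $N_G(P)$. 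These are exactly the hypotheses of Lemma \ref{Yu2017} for the group $N_G(P)$, so Lemma \ref{Yu2017} yields that $N_G(P)$ is $p$-supersolvable.

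With $N_G(P)$ now known to be $p$-supersolvable, all hypotheses of Theorem \ref{main_theorem} are in place, and I would conclude that $G$ is $p$-supersolvable, as required.

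In contrast to Theorem \ref{LiuYu_gen}, there is no final passage from $p$-supersolvability to $p$-nilpotency, because here the target property is exactly $p$-supersolvability and no coprimality condition $(|G|, p-1) = 1$ is imposed. Consequently I expect no genuine obstacle: the only points demanding a moment of care are the verification that $P$ is a Sylow $p$-subgroup of $N_G(P)$ and the observation that the $s$-permutability hypotheses, being stated relative to $N_G(P)$, match precisely what Lemma \ref{Yu2017} requires when applied to $N_G(P)$ rather than to $G$.
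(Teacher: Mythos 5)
Your proposal is correct and follows essentially the same route as the paper: the paper's proof likewise invokes Lemma \ref{Yu2017} to obtain the $p$-supersolvability of $N_G(P)$ and then concludes via Theorem \ref{main_theorem}. Your extra verifications (that $P$ is the Sylow $p$-subgroup of $N_G(P)$ and that the $s$-permutability hypotheses match Lemma \ref{Yu2017} applied to $N_G(P)$) are sound and simply make explicit what the paper leaves implicit.
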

	
	\begin{proof}
		By Theorem \ref{main_theorem}, it suffices to show that $N_G(P)$ is $p$-supersolvable. But this follows immediately from Lemma \ref{Yu2017}. 
	\end{proof}
	
	Following Bianchi et al. \cite{Bianchi}, we say that a subgroup $H$ of a group $G$ is an \textit{$\mathscr{H}$-subgroup} of $G$ if $N_G(H) \cap H^g \le H$ for all $g \in G$. Following Asaad et al. \cite{AsaadHelielAlShomrani}, we say that a subgroup $H$ of a group $G$ is a \textit{weakly $\mathscr{H}$-subgroup} of $G$ if there is a normal subgroup $K$ of $G$ such that $G = HK$ and $H \cap K$ is an $\mathscr{H}$-subgroup of $G$.
	
	Recently, Chen et al. \cite{Chen} proved the following theorem.  
	
	\begin{theorem}
		\label{theorem_Chen} 
		(\cite[Theorem A]{Chen}) Let $p$ be the smallest prime dividing the order of a group $G$, and let $P$ be a Sylow $p$-subgroup of $G$. Suppose that $P' \trianglelefteq G$ and that every maximal subgroup of $P$ is a weakly $\mathscr{H}$-subgroup of $N_G(P)$. Then $G$ is $p$-nilpotent. 
	\end{theorem}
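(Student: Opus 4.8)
The plan is to follow the same two-stage strategy used in the preceding applications: first reduce the problem to showing that $N_G(P)$ is $p$-nilpotent by means of Theorem \ref{p-nilpotence_theorem}, and then establish the $p$-nilpotence of $N_G(P)$ from the local hypothesis on maximal subgroups of $P$ by invoking Lemma \ref{Asaad2}. For the reduction I would take $H := P'$. Since $P$ is a $p$-group we always have $P' \le \Phi(P)$, and since $P/P'$ is abelian we have $Z_{p-1}(P/P') = P/P'$; moreover $P' \trianglelefteq G$ by hypothesis, so $P'$ is $s$-semipermutable in $G$. Thus $H = P'$ satisfies all the requirements of Theorem \ref{p-nilpotence_theorem}, and it remains only to prove that $N_G(P)$ is $p$-nilpotent. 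Note also that $p$ is the smallest prime dividing $|N_G(P)|$, because $|N_G(P)|$ divides $|G|$ while $p$ divides $|N_G(P)|$; hence Lemma \ref{Asaad2} will be available for the group $N_G(P)$ with its normal Sylow $p$-subgroup $P$.

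The heart of the argument, which I expect to be the main obstacle, is to convert the weakly $\mathscr{H}$-subgroup hypothesis into the $c$-supplementation hypothesis required by Lemma \ref{Asaad2}. Let $M$ be a maximal subgroup of $P$, so that $M \trianglelefteq P$. By assumption there is a normal subgroup $K$ of $N_G(P)$ with $N_G(P) = MK$ such that $M \cap K$ is an $\mathscr{H}$-subgroup of $N_G(P)$. I claim that $M \cap K \trianglelefteq N_G(P)$. Since $P$ normalizes both $M$ and $K$, we have $M \cap K \trianglelefteq P$. Now let $g \in N_G(P)$; because $P \trianglelefteq N_G(P)$, conjugation by $g$ carries the normal subgroup $M \cap K$ of $P$ to a normal subgroup $(M \cap K)^g$ of $P$, so in particular $(M \cap K)^g \le P \le N_{N_G(P)}(M \cap K)$. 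The defining property of an $\mathscr{H}$-subgroup then gives $(M \cap K)^g = N_{N_G(P)}(M \cap K) \cap (M \cap K)^g \le M \cap K$, and comparing orders yields $(M \cap K)^g = M \cap K$. Hence $M \cap K \trianglelefteq N_G(P)$, so $M \cap K \le M_{N_G(P)}$, and therefore $M$ is $c$-supplemented in $N_G(P)$ with supplement $K$.

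Having shown that every maximal subgroup of $P$ is $c$-supplemented in $N_G(P)$, I would apply Lemma \ref{Asaad2} to $N_G(P)$, choosing $D$ with $|D| = |P|/p$. The subgroups of $P$ of order $|D|$ are precisely the maximal subgroups of $P$, which are $c$-supplemented by the previous step, while the only subgroup of order $p|D| = |P|$ is $P$ itself, which is $c$-supplemented because it is normal in $N_G(P)$. The exceptional clause for nonabelian $2$-groups applies only when $|D| = 1$, i.e. when $|P| = p$, in which case $P$ is cyclic and the clause is vacuous. Thus Lemma \ref{Asaad2} yields that $N_G(P)$ is $p$-nilpotent, and then Theorem \ref{p-nilpotence_theorem}, applied with $H = P'$ as above, shows that $G$ is $p$-nilpotent, completing the proof.
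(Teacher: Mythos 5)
Your proposal is correct and follows essentially the same route as the paper, which obtains this result as the special case $H = P'$, $|D| = |P|/p$ of Theorem \ref{gen_Chen} (i.e.\ the combination of Theorem \ref{p-nilpotence_theorem} with Lemma \ref{Asaad2}), after observing that every weakly $\mathscr{H}$-subgroup of $N_G(P)$ contained in $P$ is $c$-supplemented in $N_G(P)$. The only difference is cosmetic: where the paper cites \cite[Theorem 6 (2)]{Bianchi} to conclude that the subnormal $\mathscr{H}$-subgroup $M \cap K$ is normal in $N_G(P)$, you prove this fact directly, and your argument is valid.
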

	
	By \cite[Theorem 6 (2)]{Bianchi}, a subgroup $H$ of a group $G$ is normal in $G$ provided that $H$ is both subnormal and an $\mathscr{H}$-subgroup of $G$. In particular, if $P$ is a Sylow $p$-subgroup of a group $G$, then a subgroup of $P$ is an $\mathscr{H}$-subgroup of $N_G(P)$ if and only if it is normal in $N_G(P)$. Consequently, a subgroup of $P$ is $c$-supplemented in $N_G(P)$ if it is a weakly $\mathscr{H}$-subgroup of $N_G(P)$. 
	
	In the next two results, we take the hypotheses of Theorem \ref{theorem_Chen}, but we replace “weakly $\mathscr{H}$-subgroup” by “$c$-supplemented”, and we also modify some other conditions.  
	
	\begin{theorem}
		\label{gen_Chen}
		Let $p$ be the smallest prime dividing the order of a group $G$, and let $P$ be a Sylow $p$-subgroup of $G$. Assume that there is a normal subgroup $H$ of $P$ with $H \le \Phi(P)$ and $P/H = Z_{p-1}(P/H)$ such that $H$ is $s$-semipermutable in $G$. Suppose that there is a subgroup $D$ of $P$ with $1 \le |D| < |P$ such that every subgroup of $P$ with order $|D|$ or $p|D|$ is $c$-supplemented in $N_G(P)$. If $P$ is a nonabelian $2$-group and $|D| = 1$, suppose moreover that any cyclic subgroup of $P$ with order $4$ is $c$-supplemented in $N_G(P)$. Then $G$ is $p$-nilpotent. 
	\end{theorem}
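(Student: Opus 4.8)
The plan is to mirror the strategy used in the proof of Theorem~\ref{LiuYu_gen}: reduce the $p$-nilpotence of $G$ to the $p$-nilpotence of the local normalizer $N_G(P)$ by means of Theorem~\ref{p-nilpotence_theorem}, and then establish the $p$-nilpotence of $N_G(P)$ from the $c$-supplementation hypotheses via Lemma~\ref{Asaad2}.

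First I would note that the local assumptions on $H$ in Theorem~\ref{gen_Chen} — that $H \trianglelefteq P$, $H \le \Phi(P)$, $P/H = Z_{p-1}(P/H)$, and $H$ is $s$-semipermutable in $G$ — are precisely the hypotheses required by Theorem~\ref{p-nilpotence_theorem}. Moreover, since $p$ is the smallest prime dividing $|G|$, in particular $p$ divides $|G|$, so $G$ has order divisible by $p$. Hence Theorem~\ref{p-nilpotence_theorem} applies once we know that $N_G(P)$ is $p$-nilpotent, and the whole problem reduces to proving this last fact.

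To show that $N_G(P)$ is $p$-nilpotent, I would apply Lemma~\ref{Asaad2} with $N_G(P)$ in the role of the ambient group. Verifying its hypotheses is routine: $P$ is a Sylow $p$-subgroup of $N_G(P)$ because $P \le N_G(P) \le G$ and $P$ is Sylow in $G$; and $p$ remains the smallest prime dividing $|N_G(P)|$, since $P \le N_G(P)$ forces $p \mid |N_G(P)|$ while every prime divisor of $|N_G(P)|$ also divides $|G|$ and therefore is not smaller than $p$. The remaining conditions — that some $D \le P$ with $1 \le |D| < |P|$ has the property that every subgroup of $P$ of order $|D|$ or $p|D|$ is $c$-supplemented in $N_G(P)$, together with the caveat on cyclic subgroups of order $4$ when $P$ is a nonabelian $2$-group and $|D| = 1$ — are imposed verbatim in $N_G(P)$ by the hypotheses of Theorem~\ref{gen_Chen}. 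Thus Lemma~\ref{Asaad2} yields that $N_G(P)$ is $p$-nilpotent, and combining this with the reduction above completes the proof.

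Because the $c$-supplementation assumptions are already stated relative to $N_G(P)$ rather than to $G$, no transfer of these conditions between groups is needed, and I expect essentially no obstacle: the argument is a two-step combination of Theorem~\ref{p-nilpotence_theorem} and Lemma~\ref{Asaad2}. The only point demanding a moment's attention is confirming that $p$ stays the smallest prime dividing $|N_G(P)|$, which is immediate from $N_G(P) \le G$.
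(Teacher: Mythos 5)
Your proposal is correct and follows exactly the paper's own two-step argument: reduce to the $p$-nilpotence of $N_G(P)$ via Theorem~\ref{p-nilpotence_theorem}, then obtain it from Lemma~\ref{Asaad2}. Your verification that $p$ remains the smallest prime dividing $|N_G(P)|$ and that $P$ is Sylow in $N_G(P)$ fills in routine details the paper leaves implicit.
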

	
	\begin{proof}
		By Theorem \ref{p-nilpotence_theorem}, it suffices to show that $N_G(P)$ is $p$-nilpotent. Lemma \ref{Asaad2} shows that $N_G(P)$ is $p$-nilpotent.
	\end{proof}
	
	\begin{theorem}
		Let $P$ be a Sylow $p$-subgroup of a $p$-solvable group $G$. Assume that there is a subgroup $H$ of $P$ such that $P' \le H \le \Phi(P)$ and such that $H$ is $s$-semipermutable in $G$. Suppose that there is a subgroup $D$ of $P$ with $1 \le |D| < |P$ such that every subgroup of $P$ with order $|D|$ or $p|D|$ is $c$-supplemented in $N_G(P)$. If $P$ is a nonabelian $2$-group and $|D| = 1$, suppose moreover that any cyclic subgroup of $P$ with order $4$ is $c$-supplemented in $N_G(P)$. Then $G$ is $p$-supersolvable.  
	\end{theorem}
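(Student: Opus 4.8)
The plan is to follow the same reduction used for the preceding theorems of this section. Since $G$ is $p$-solvable and there is a subgroup $H$ with $P' \le H \le \Phi(P)$ that is $s$-semipermutable in $G$, Theorem \ref{main_theorem} guarantees that $G$ is $p$-supersolvable the moment we know that $N_G(P)$ is $p$-supersolvable. Thus the entire problem reduces to proving the $p$-supersolvability of $N := N_G(P)$.

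To treat $N$, I would invoke Lemma \ref{Asaad1}. Observe first that $P$ is a nontrivial normal Sylow $p$-subgroup of $N$: it is normal in $N$ by the very definition of the normalizer, it remains Sylow since it is already a Sylow $p$-subgroup of $G$, and it is nontrivial because the existence of a subgroup $D$ with $1 \le |D| < |P|$ forces $|P| > 1$. The $c$-supplementation conditions imposed on the subgroups of $P$ of order $|D|$ and $p|D|$ (together with the extra condition on cyclic subgroups of order $4$ in the exceptional nonabelian $2$-group case) are precisely the hypotheses of Lemma \ref{Asaad1} applied with ambient group $N$. Hence Lemma \ref{Asaad1} yields $P \le Z_{\mathcal{U}}(N)$.

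It then remains to deduce the $p$-supersolvability of $N$ from the inclusion $P \le Z_{\mathcal{U}}(N)$, and this is the only step that is not a direct citation. The key observation is that, because $P$ is a full Sylow $p$-subgroup of $N$ lying inside $Z_{\mathcal{U}}(N)$, the quotient $N/Z_{\mathcal{U}}(N)$ has order prime to $p$. Choosing a chief series of $N$ refined through $Z_{\mathcal{U}}(N)$, every chief factor lying below $Z_{\mathcal{U}}(N)$ is cyclic by the defining property of the supersolvable hypercentre, while every chief factor lying above $Z_{\mathcal{U}}(N)$ is a chief factor of the $p'$-group $N/Z_{\mathcal{U}}(N)$ and so has order prime to $p$. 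Consequently every chief factor of $N$ of order divisible by $p$ is cyclic, which is exactly the assertion that $N$ is $p$-supersolvable. Plugging this into Theorem \ref{main_theorem} finishes the proof. The main obstacle is this last, modest deduction about the supersolvable hypercentre; the remaining steps are immediate applications of the results already available.
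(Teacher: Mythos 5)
Your proposal is correct and takes essentially the same route as the paper: reduce via Theorem \ref{main_theorem} to showing that $N_G(P)$ is $p$-supersolvable, then apply Lemma \ref{Asaad1} in $N_G(P)$ (where $P$ is a nontrivial normal Sylow $p$-subgroup) to place $P$ inside $Z_{\mathcal{U}}(N_G(P))$, so that all chief factors of $N_G(P)$ of order divisible by $p$ are cyclic. The paper compresses your final step into the one-line remark that every chief factor of $N_G(P)$ below $P$ is cyclic and hence $N_G(P)$ is $p$-supersolvable; your explicit justification via the $p'$-quotient above the hypercentre is exactly what that remark leaves implicit.
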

	
	\begin{proof}
		As a consequence of Lemma \ref{Asaad1}, every chief factor of $N_G(P)$ below $P$ is cyclic. Consequently, $N_G(P)$ is $p$-supersolvable. So $G$ is $p$-supersolvable by Theorem \ref{main_theorem}. 
	\end{proof}

	\noindent \textbf{Disclosure Statement.} The authors report there are no competing interests to declare.


\begin{thebibliography}{s2}
		
		
		\bibitem{Asaad1} 
		Asaad, M. (2012). 
		On $c$-supplemented subgroups of finite groups. 
		\textit{J. Algebra} 362:1--11. 
		
		\bibitem{AsaadHelielAlShomrani}
		Asaad, M., Heliel, A. A., Al-Shomrani, M. M. (2012).
		On weakly $\mathscr{H}$-subgroups of finite groups. \textit{Comm. Algebra} 40(9):3540--3550. 
		
		\bibitem{BB} 
		Ballester-Bolinches, A., Wang, Y., Guo, X. (2000). 
		$c$-supplemented subgroups of finite groups. 
		\textit{Glasg. Math. J.} 42(3):383--389.
		
		\bibitem{Products} 
		Ballester-Bolinches, A., Esteban-Romero, R., Asaad, M. (2010). 
		\textit{Products of finite groups}. Berlin/New York: De Gruyter.
		
		\bibitem{BerkovichIsaacs} Berkovich, Y., Isaacs, I. M. (2014). $p$-Supersolvability and actions on $p$-groups stabilizing certain subgroups. \textit{J. Algebra} 414:82--94.
		
		\bibitem{Bianchi} 
		Bianchi, M., Mauri, A., Herzog, M., Verardi, L. (2000).
		On finite solvable groups in which normality is a transitive relation. 
		\textit{J. Group Theory} 3:147--156. 
		
		\bibitem{Chen}
		Chen, R., Li, X., Zhao, X. (2022).
		On weakly $\mathscr{H}$-subgroups of finite groups II*. \textit{Comm. Algebra} 50(9):4009--4015. 
		
		\bibitem{DoerkHawkes} 
		Doerk, K., Hawkes, T. (1992). \textit{Finite Soluble Groups}. Berlin/New York: Walter De Gruyter.
		
		\bibitem{GAP}
		The GAP Group (2021). 
		\textit{GAP -- Groups, Algorithms, and Programming}, Version 4.11.1. Available at: \url{https://www.gap-system.org}.
		
		\bibitem{Gorenstein} 
		Gorenstein, D. (1980). \textit{Finite Groups}. New York: Chelsea Publishing Company.  
		
		\bibitem{Guo} Guo, W. (2000). \textit{The Theory of Classes of Groups}. Beijing/New York/Dordrecht/Boston/London: Science Press--Kluwer Academic Publishers.
		
		\bibitem{Huppert}
		Huppert, B. (1967). \textit{Endliche Gruppen I}. Berlin: Springer.
		
		\bibitem{Isaacs} 
		Isaacs, I. M. (2014). 
		Semipermutable $\pi$-subgroups. 
		\textit{Arch. Math.} 102(1):1--6. 
		
		\bibitem{ChangwenLi}
		Li, C., Zhang, X., Yi, X. (2014). On partially $\tau$-quasinormal subgroups of finite groups. \textit{Hacet. J. Math. Stat.} 43(6):953--961.
		
		\bibitem{LiuYu} 
		Liu, S., Yu, H. (2021). 
		A note on pronormal $p$-subgroups of finite groups. \textit{Monatsh Math} 195(1):173--176. 
		
		\bibitem{XuLi} 
		Xu, X., Li, Y. (2019). 
		A criterion on the finite $p$-nilpotent groups. \textit{J. Math. Res. Appl.} 39(3):254--258. 
		
		\bibitem{Yu} 
		Yu, H. (2017). 
		On $s$-semipermutable or $s$-permutably embedded subgroups of finite groups. \textit{Acta Math. Hungar.} 151(1):173--180.
		
		
	\end{thebibliography}
\end{document}